\numberwithin{equation}{section}
\theoremstyle{plain}
\newtheorem{theorem}[equation]{Theorem}
\newtheorem*{theorem*}{Theorem}
\newtheorem{lemma}[equation]{Lemma}
\newtheorem{proposition}[equation]{Proposition}
\theoremstyle{remark}
\newtheorem{remark}[equation]{Remark}
\theoremstyle{definition}
\newtheorem*{conjecture*}{Conjecture}
\theoremstyle{definition}
\DeclareMathOperator{\divis}{div}
\DeclareMathOperator{\Div}{Div}
\newcommand{\Cl}{\textnormal{Cl}}
\newcommand{\ZZ}{{\mathbb Z}}
\author{Magnus Carlson}
\email{magnus.carlson@math.su.se}
\address{M.C: Matematiska institutionen, Stockholm University,
106 91 Stockholm, Sweden}
\author{Hee-Joong Chung}
\email{hjchung@jejunu.ac.kr}
\address{H.-J. C.: Department of Science Education, Jeju National University, 102 Jejudaehak-ro, Jeju-si, Jeju-do, 63243, South Korea}
\author{Dohyeong Kim}
\email{dohyeongkim@snu.ac.kr}
\address{D.K: Department of Mathematical Sciences and Research Institute of Mathematics, Seoul National University, 1 Gwanak-ro, Gwanak-gu, Seoul 08826, South Korea}
\author{Minhyong Kim}
\email{minhyong.kim@icms.org.uk}
\address{M.K.: International Centre for Mathematical Sciences, 47 Potterrow,  Edinburgh EH8 9BT\\
Korea Institute for Advanced Study, 85 Hoegiro, Dongdaemungu, Seoul, South Korea}
\author{Jeehoon Park}
\email{jpark.math@gmail.com}
\address{J.P: QSMS, Seoul National University, 1 Gwanak-ro, Gwanak-gu, Seoul 08826, South Korea}
\author{Hwajong Yoo}
\email{hwajong@snu.ac.kr}
\address{H.Y.: College of Liberal Studies and Research Institute of Mathematics,
Seoul National University, 1 Gwanak-ro, Gwanak-gu,  Seoul 08826, South Korea
}
\def\UX{\O_X^{\times}}
\def\<{\langle }
\def\>{\rangle}
\def\n2Z{\frac{1}{n^2}\Z/\Z}
\def\g{\gamma}
\def\k{\kappa}
\def\G{\Gamma}
\def\O{\mathcal{O}}
\def\Z{\mathbb{Z}}
\def\Spec{\textnormal{Spec}}
\def\ra{\rightarrow}
\def\A{\mathcal{A}}
\def\Q{\mathbb{Q}}
\def\C{\mathbb{C}}
\def\Gal{\textnormal{Gal}}
\def\k{\kappa}
\begin{document}
\title{Path Integrals and $p$-adic $L$-functions }


\maketitle
\begin{abstract}
We prove an arithmetic path integral formula for the inverse $p$-adic absolute values of Kubota-Leopoldt $p$-adic $L$-functions at roots of unity.
\end{abstract}

\section{Primes, knots, and quantum fields}
\subsection{Arithmetic topology}
Barry Mazur \cite{mazur1, mazur2} pointed out long ago that the cohomological properties of $\Spec(\O_F)$, the spectrum of the ring of integers of an algebraic number field, are like those of a 3-manifold. This went with the observation  that the inclusion
$$\Spec(k(P))\hookrightarrow \Spec(\O_F)$$
of the spectrum of the residue field $k(P)$ of a prime $P$ of $\O_F$ compares well to the inclusion of a knot $\k$ into a 3-manifold \cite{morishita}. When we remove a finite collection $S$ of primes and consider
$\Spec(O_F)\setminus S$, the properties then are like a 3-manifold with boundary obtained by removing  tubular neighbourhoods of  the knots. Mazur went on to consider the cases of 
$$\Spec(\Z), ~~\Spec(\Z[1/p]),  ~~ \textnormal{and} ~~
\Spec(\Z[\mu_p][\frac{1}{\zeta_p-1}]).$$ There, one arrives at an analogy between the covering
$$\Spec(\Z[\mu_{p^\infty}][1/p]) \ra \Spec(\Z[\mu_p][\frac{1}{\zeta_p-1}])$$
with Galois group $$\G:=\Gal( \Q(\mu_{p^{\infty}})/\Q(\mu_{p}))\simeq \Z_p$$
and the maximal abelian covering
$$D_{\k}\ra S^3\setminus \k,$$
which has group of deck transformations isomorphic to $\Z$.
In Iwasawa theory, the main object of study is
$$V=\Gal(M/\Q(\mu_{p^{\infty}})),$$
the Galois group of the maximal abelian unramified $p$-extension $M$ of $\Q(\mu_{p^{\infty}})$, acted on by the Iwasawa algebra
$$\Z_p[[\G]]\simeq \Z_p[[T]].$$ The isomorphism here comes from
$\g-1\mapsto T$ for a fixed topological generator $\g$ of $\G$. There is also an action of $\Gal(\Q(\mu_p)/\Q)$ (which can  be realised as a subgroup of $\Gal(\Q(\mu_{p^{\infty}})/\Q)$), according to which $V$ splits into isotypic components $V_k$ via the Teichm\"uller character $\omega: \Gal(\Q(\mu_p)/\Q)\ra \Z_p^{\times}$ and its powers $\omega^k$.
The {\em main conjecture of Iwasawa theory} \cite{mazur-wiles} as proved by Mazur and Wiles relates the determinants of these isotypic components to various branches of the $p$-adic zeta function. (For general background, we refer the reader to Washington's excellent book \cite{washington}.) Namely,  for an odd prime $p$ and for $j=1, 3, \ldots, p-2$ odd, the Kubota-Leopoldt $p$-adic $L$-function $L_p(\omega^j, s)$ is the continuous function on $\Z_p$ such that
$$L_p(\omega^j, \ell)=(1-p^{-{\ell}})\zeta(\ell) $$
for negative integers $\ell\equiv j \mod p-1$. There is a unique power series $z_j(T)\in \Z_p[[T]]$ such that
$$z_j((1+p)^s-1)=L_p(\omega^j, s),$$
enabling us to identify $z_j(T)$ with the $p$-adic $L$-function itself. The main conjecture  says that 
$z_{1-k}(T)$ is the determinant of the $\Z_p[[T]]$-module $V_k$ for $k\neq 1$ odd.
Mazur's main observation in \cite{mazur1} was that the Alexander polynomial of a knot also has a precise definition as a determinant of the module 
$$H_1(D_{\kappa}, \Z)$$
for
$$\Z[\Z]\simeq \Z[t, t^{-1}],$$
strengthening the circle of analogies that has now come to be known as {\em arithmetic topology}.

\subsection{Quantum field theory and knots invariants}
Meanwhile, in the late 1980s, Edward Witten \cite{witten} gave a remarkable construction of the Jones polynomial of a knot using the methods of quantum field theory, which were then made rigorous by Reshetikhin and Turaev \cite{RT}. Here, we have a space
$\A$ of $SU(2)$ connections 
on $S^3$ acted upon by a group $\mathcal{G}$ of gauge transformations.  The knot $\k$  defines  a Wilson loop function
$$W_{\k}: \A\ra \C$$
that sends a connection $A$ to $$\textnormal{Tr}(\rho(\textnormal{Hol}_{\k}(A))),$$ the trace of the holonomy of the connection around $\k$ evaluated in the standard representation $\rho$ of $SU(2)$. (The importance of such a function should not be surprising at all to number-theorists.) There is also a `global' {\em Chern-Simons} function given by
$$CS(A)=\frac{1}{8\pi^2}\int \textnormal{Tr}(A\wedge dA+\frac{2}{3}A\wedge A\wedge A),$$
which is only gauge-invariant up to integers. Witten's result is then
$$\int_{\A/\mathcal{G}}W_{\k}(A)  \exp(2\pi i n CS(A))d A= J_{\k}\left(\exp\left(\frac{2\pi i}{n+2}\right)\right),$$
equating a path integral with the value of the {\em Jones polynomial } $J_{\k}$ of $\k$ at a root of unity. (The reader should beware that many different normalisations exist in the literature.)  A somewhat complicated analogue for the Alexander polynomial can be found in \cite{bar-natan, cattaneo, CCFM, CCFM2, melvin-morton, rozansky}.
\subsection{Arithmetic path integrals}
We would like to prove a simple arithmetic analogue of Witten's formula for $p$-adic $L$-functions. Following the framework of {\em arithmetic topological quantum field theory}
set up in \cite{kim1, kim2, CKKPY, CKKPPY, HKM, pappas}, our goal is to represent the $p$-adic $L$-function as an arithmetic path integral, thereby incorporating the perspective of topological quantum field theory into arithmetic topology in a rather concrete fashion and strengthening the analogy envisioned by Mazur. It should be admitted right away that we do not achieve this goal. However, we do find a result about its $p$-adic valuation that appears to be interesting. To describe this, we go on to define the relevant space of `arithmetic fields'.

Let $q = p^n$ where $p$ is an odd prime and $n$ is a positive integer.
We set $K = \mathbb Q(\mu_q)$ and let $$X = \mathrm{Spec} \left( \mathbb Z[\zeta_q] \right) \backslash (\zeta_q-1),$$ where $\zeta_q$ is a primitive $q$-th root of unity.
We fix an integer $m \ge 1$ and define the space of fields as 
	\begin{align*}
		\mathscr F^m:=H^1( X,\mu_{p^m}) \times H^1_c( X, \mathbb Z / p^m \mathbb Z),
	\end{align*}
where $H^1_c$ denotes compactly supported \'etale cohomology \cite[Chapter 2]{milne}. This is an abelian moduli space of principals bundles together with its dual, a setting that allows the definition of topological actions in physics in arbitrary dimension and even on non-orientable manifolds \cite{CCFM}. 
The BF-action is the map
	\begin{align*}
		B\!F \colon \mathscr F^m \to \frac{1}{p^m} \mathbb Z / \mathbb Z
	\end{align*}
that takes $(a,b) \in \mathscr F^m$ to $$\mathrm{inv} (d a\cup b),$$ where $$d \colon H^1(X, \mu_{p^m}) \to H^2( X, \mu_{p^m})$$ is the Bockstein map coming from the exact sequence
	\begin{align*}
		1 \to \mu_{p^m} \to \mu_{p^{2m}} \to \mu_{p^m} \to 1
	\end{align*}
and $$\mathrm{inv} \colon H^3_c ( X, \mu_{p^m}) \xrightarrow{\sim} \frac{1}{p^m} \mathbb Z/ \mathbb Z$$ is the invariant map \cite{mazur2}.

	There is a natural action of $G = \mathrm{Gal}(K/\mathbb Q)$ on the space of fields $\mathscr F^m$, and we let $G' (\simeq \mathrm{Gal}(\mathbb Q(\mu_p)/\mathbb Q)) \subset G$ be the unique subgroup of $G$ of order $p-1$.
Since $p-1$ is not divisible by $p$, $G'$ acts semi-simply on $\mathscr F^m.$ Let us define $$\mathscr F^m_k:= H^1(X,\mu_{p^m})_{k} \times H^1_c(X,\ZZ/p^m\ZZ)_{-k},$$ i.e. on the first factor we take the $\omega^{k}$-eigenspace, and on the second factor we take the $\omega^{-k}$-eigenspace.

\begin{theorem} 
Let $k$ be odd and different from 1. Then we have
\begin{equation*}
|\prod_{j=0}^{p^n-1}z_{1-k}(\exp(2\pi i j/p^n)-1)^{-1}|_p=
\lim_{m\ra \infty} 
\sum_{(a,b) \in \mathscr F^m_k } \exp \left( 2 \pi i B\!F(a,b)\right)
\end{equation*}
\end{theorem}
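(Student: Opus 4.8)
The plan is to compute the right-hand side explicitly as a Gauss-type sum over a finite abelian group, identify it with the order of a certain cohomology group, and then match that order against the $p$-adic valuation of the product of $L$-values via the Iwasawa main conjecture.

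First I would unwind the sum $\sum_{(a,b)\in\mathscr F^m_k}\exp(2\pi i BF(a,b))$. The pairing $(a,b)\mapsto \mathrm{inv}(da\cup b)$ is $\ZZ/p^m$-bilinear, so for fixed $a$ the inner sum over $b\in H^1_c(X,\ZZ/p^m\ZZ)_{-k}$ is a sum of a nontrivial additive character over a group unless the linear functional $b\mapsto \mathrm{inv}(da\cup b)$ vanishes identically, in which case it contributes $\#H^1_c(X,\ZZ/p^m\ZZ)_{-k}$. By Poincaré-Artin-Verdier duality on $X$ (a punctured arithmetic curve, Mazur's "3-manifold with boundary"), the cup product $H^2(X,\mu_{p^m})_k \times H^1_c(X,\ZZ/p^m\ZZ)_{-k}\to H^3_c(X,\mu_{p^m})\xrightarrow{\sim}\frac{1}{p^m}\ZZ/\ZZ$ is a perfect pairing of finite groups; hence $b\mapsto\mathrm{inv}(da\cup b)$ vanishes for all $b$ exactly when $da=0$ in $H^2(X,\mu_{p^m})_k$. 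Therefore the whole sum collapses to
\[
\#\left(\ker\bigl(d\colon H^1(X,\mu_{p^m})_k\to H^2(X,\mu_{p^m})_k\bigr)\right)\cdot \#H^1_c(X,\ZZ/p^m\ZZ)_{-k}.
\]
The Bockstein $d$ is the connecting map for $1\to\mu_{p^m}\to\mu_{p^{2m}}\to\mu_{p^m}\to 1$, so $\ker d$ is the image of $H^1(X,\mu_{p^{2m}})_k\to H^1(X,\mu_{p^m})_k$, and one can rewrite the product intrinsically in terms of $\mu_{p^m}$- and $\mu_{p^{2m}}$-cohomology; after taking $m\to\infty$ these stabilize to a quantity governed by the Iwasawa module $V$.

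Second I would translate these cohomology groups, via inflation-restriction and Kummer theory along the tower $X_\infty\to X$, into the language of the excerpt. The eigenspace $H^1(X,\mu_{p^m})_k$ is related by a twist to a piece of the $p$-ramified, $p^m$-torsion arithmetic of $\Q(\mu_{p^\infty})$, and under the main conjecture its order (together with the dual factor, which by global Euler characteristic formulas essentially cancels the "expected" part) is controlled by $z_{1-k}(T)$. Concretely, I expect the surviving quantity to be the order of $V_k/(\text{stuff})$, and since $z_{1-k}(T)$ is the characteristic power series of $V_k$, a resultant-type computation gives $\#(V_k/(\prod_{j}(\,(1+p)^{s_j}-1 - T\,)))$ where $s_j$ runs over the relevant $p^n$ points, i.e. $\prod_{j=0}^{p^n-1}|z_{1-k}(\exp(2\pi i j/p^n)-1)|_p^{-1}$ by the standard formula relating the $p$-adic valuation of a resultant of a power series with $X^{p^n}-1$ (after the substitution $T=(1+p)^s-1$) to the product of its values at those arguments. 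Taking absolute reciprocal values converts "order of a finite quotient" into the product of inverse $p$-adic absolute values, matching the left-hand side.

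The main obstacle will be the bookkeeping in the second step: getting the eigenspace indices, the Tate twists, and the precise finite groups to line up so that the dual factor $\#H^1_c(X,\ZZ/p^m\ZZ)_{-k}$ exactly accounts for the non-characteristic-ideal contributions, leaving precisely the resultant of $z_{1-k}$ with $X^{p^n}-1$ and nothing else. This requires care with the fact that $X$ omits the prime above $p$, with the behavior of the invariant map and Artin-Verdier duality for $\mu_{p^m}$ on such an open, and with the condition $k$ odd and $k\neq 1$ (which ensures $V_k$ is finite over $\ZZ_p[[T]]$ with no trivial zero pathology, so $z_{1-k}$ is genuinely the characteristic series and the limit $m\to\infty$ converges). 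Verifying convergence of the limit — i.e. that the finite sums stabilize — follows once the sum is identified with the order of a group that is eventually constant in $m$, which is a consequence of the finite generation of $V$ over the Iwasawa algebra.
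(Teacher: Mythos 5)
Your first step reproduces the paper's argument exactly: by non-degeneracy of the Artin--Verdier pairing the inner character sum over $b$ annihilates every $a$ with $da\neq 0$, leaving
$$\left|\ker d\cap H^1(X,\mu_{p^m})_k\right|\cdot\left|H^1_c(X,\ZZ/p^m\ZZ)_{-k}\right|,$$
and your closing step --- identifying $|\Cl_K[p^{\infty}]_k|$ with $|\ZZ_p[[T]]/((T+1)^{p^n}-1,\,z_{1-k}(T))|=|\prod_j z_{1-k}(\zeta_{p^n}^j-1)^{-1}|_p$ via the main conjecture --- is also how the paper finishes, modulo the Euler-characteristic argument needed because $V_k$ is only pseudo-isomorphic to the elementary module $\prod_j\ZZ_p[[T]]/(f_j)$, together with the totally-ramified descent $(\Cl_{K}[p^\infty])_k\simeq V_k/((T+1)^{p^n}-1)V_k$.

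The genuine gap is in the middle, and it is not bookkeeping. The dual factor $|H^1_c(X,\ZZ/p^m\ZZ)_{-k}|$ does not ``cancel the expected part'': it equals $|(\Cl_K/p^m)_k|$ and is precisely the class-group quantity that must survive into the limit. What has to be controlled is $\ker d$ itself. Using the presentation $H^1(X,\mu_{p^m})=Z_1/B_1$, the Bockstein factors as $H^1(X,\mu_{p^m})\twoheadrightarrow \Cl_K[p^m]\rightarrow \Cl_K/p^m$, so that
$$|(\ker d)_k|=\left|\left(\O_X^\times/(\O_X^\times)^{p^m}\right)_k\right|\cdot\left|\left(p^m\cdot\Cl_K[p^{2m}]\right)_k\right|.$$
The second factor is eventually $1$, but the first is a unit-group contribution that your outline never isolates, and it is exactly where the hypothesis ``$k$ odd, $k\neq 1$'' enters. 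The paper disposes of it by Bass's theorem that the $(1-\zeta)$-units modulo roots of unity form the Galois module $\ZZ[G^+]$, so that $(\O_X^\times/(\O_X^\times)^{p^m})_k$ is trivial for odd $k\neq 1$ but has order $p^m$ for even $k$ --- which is why the even-$k$ path integral diverges and requires a $p^{-m}$ normalisation in the paper. Without this input (or at least the weaker fact that $\O_X^\times\otimes\Q$ is the regular representation of $G^+$, so its odd eigenspaces vanish apart from the $\mu_q$-torsion in eigenspace $\omega^1$), your limit would carry an uncontrolled extra factor and could not be identified with $|\Cl_K[p^\infty]_k|$; the main conjecture alone cannot supply this, since it says nothing about the global units entering $\ker d$.
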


\begin{remark}
Note that the variable $T$ here corresponds to $\gamma-1$, where $\gamma $ is a topological generator of $\G$. Thus, the value of $T$ on $\exp(2\pi i j/p^n)-1$ corresponds to the value $\exp(2\pi i j/p^n)$ assigned to $\gamma$. In the Alexander polynomial, the variable $t$ indeed corresponds to a generator of $H_1(S^3\setminus K, \Z)$. Thus, in the analogy of arithmetic topology, the values occurring on the left of our formula correspond exactly to the values of the standard variable in topology at roots of unity. We could use the Weierstrass preparation theorem  and the vanishing of the $\mu$-invariant \cite{FW} to replace $z_j$ by a distinguished polynomial $P_j$, such that $$z_j(T)=P_j(T)u_j(T)$$
for a unit $u_j(T)$. With  this we can restore the variable $t=T+1$ and put $Q_j(t)=P_j(t-1)$.
The left-hand side of our formula can then be written
$$|\prod_{j=0}^{p^n-1}Q_{1-k}(\exp(2\pi i j/p^n))^{-1}|_p.$$
One might argue that the $Q_j$ are the true analogues of the Alexander polynomial.
\end{remark}
\begin{remark}
Of course since we are taking the $p$-adic absolute value, the formula is the same if we change $z_k$ by a unit. Hence, it really is about  characteristic power series rather than the precise choice of $p$-adic $L$-functions. Nevertheless, since the $p$-adic $L$-functions are the objects of central interest in number theory, we have stated the theorem in these terms. Obviously, for this we need the main conjecture of Iwasawa theory  which we will take for granted  in the rest of this paper.
\end{remark}
\begin{remark}
In physics, it's common to be vague about the domain of the path integral. That is, one imagines a sequence of inclusions
$$\mathcal{C}\subset \A_1\subset \A_2\subset \cdots $$
containing the space $\mathcal{C}$ of classical fields (solutions to the equation of motion) and integrals over $\A_n$ giving successively more information. On the other hand, after some point, further enlargement shouldn't matter. That is, the inclusion  $\mathcal{C}\subset \A_n$ into a sufficiently flabby space should play a role similar to an acyclic resolution of a complex where any two resolutions are suitably homotopic. In gauge theory, for example, the space of $C^{\infty}$ connections is thought to be an adequate domain. Even there, one could include discontinuous or distributional connections in the flavour of Feynman's original heuristic arguments with jagged paths. The limit we are taking can be interpreted as
$$\int_{\mathscr F_k} \exp(2\pi iBF(a,b)) dadb,$$
an integral over the domain
\begin{equation*}
\begin{split}
\mathscr F_k&= \varprojlim_m H^1(X,\mu_{p^m})_{k} \times \varinjlim_m H^1_c(X,\ZZ/p^m\ZZ)_{-k}\\
&=H^1(X, \Z_p(1))_k\times H^1_c(X, \Q_p/\Z_p)_{-k}.
\end{split}
\end{equation*}
This has very  much the flavour of a space of distributional fields.

\end{remark}
\begin{remark}
The obvious challenge is to remove the absolute value from the $p$-adic $L$-value, incorporating the unit information. In the analogy with physics, a unit is like a `phase', leading one to believe that it can be recovered from a refinement of the given path integral.
\end{remark}

\medskip
\section{Path integrals for cyclotomic integers}
Let $\textnormal{Cl}_K$ be the ideal class group of $K$ and $\O_X^\times$ the group of units in $\Z[\mu_q][1/(\zeta_q-1)].$ A repetition of the  computations  found in \cite{carlson-kim} shows that the arithmetic path integral
	\begin{align*}
		\sum_{(a,b) \in \mathscr F^m } \exp \left( 2 \pi i B\!F(a,b)\right)
	\end{align*}
equals
	\begin{align*}
		\left| p^m \cdot \Cl_K[p^{2m}]\right| 
		\cdot 
		\left | \O_X^\times / \left(\O_X^\times\right)^{p^m}\right| 
		\cdot 
		\left | \Cl_K / p^m\right |
		.
	\end{align*}
	Our goal will be to prove an equivariant version of this formula. 
	
	With the definitions of the previous section, we have that
	\begin{align*}
		\mathscr F^m = \bigoplus_{k=0}^{p-2} \mathscr F^m_k.
	\end{align*}
We will generally denote by a subscript $(\cdot)_k$ the $\omega^k$-isotypic component of a $\Z_p$-module with $G'$-action. We now analyze how the  action of $G'$ on $\mathscr{F}^m$
interacts with the BF-functional. More precisely, we will see that the BF-functional splits: $$ \oplus_{k=0}^{p-2} B\!F_k:  \bigoplus_{k=0}^{p-2} \mathscr F^m_k \rightarrow \dfrac{1}{p^m} \ZZ/ \ZZ.$$ 
To see this, start by letting $\Div X$ be the free abelian group generated by the closed points in $X.$ Then, as is explained in \cite[Section 2]{carlson-kim}, we have that 
\begin{equation} H^i(X,\mu_{p^m}) = \begin{cases} \mu_{p^m}(K) & \text{for } i=0,  \\ Z_1/B_1 & \text{for } i = 1, \\ \textnormal{Cl}_K/p^m& \text{for } i=2, \\ 0 & \text{for } i >2, \end{cases} \end{equation} 
where 
$$Z_1 = \{(a,I) \in K^* \oplus \Div X : \divis(a)+p^mI=0 \}$$ and $$B_1 = \{(a^{p^m},- \divis(a)) \in K^* \oplus \Div X:  a \in K^* \}.$$ 
As is shown in \cite[Section 4]{alhqvist}, the map $d: H^1(X,\mu_{p^m}) \rightarrow H^2(X,\mu_{p^m})$ takes $$(a,I) \in H^1(X,\mu_{p^m})$$ to $I \in \Cl_K /p^m$.
By this observation, it is clear that the map $d$ is equivariant. Let us now note further that the Galois action on 
\begin{equation*}
H^3_c(X,\mu_{p^m}) \underset{\mathrm{inv}}{\simeq} \dfrac{1}{p^m} \ZZ/\ZZ
\end{equation*}
is trivial. 
This clearly implies that the BF-functional splits into direct sums as claimed, since if $$a \in H^1(X,\mu_{p^m})_{i} \quad \text{ and } \quad b \in H^1_c(X,\ZZ/{p^m}\ZZ)_{j},$$ we see that $da \cup b$ lands in the $\omega^{i+j}$-eigenspace of $H^3_c(X,\mu_{p^m})$, which is non-zero if and only if $i+j=0 \pmod{p-1}.$ 

By the above analysis of the $G'$-action, we see that the sum
	\begin{align*}
		\sum_{(a,b) \in \mathscr{F}^m} \exp \left(2 \pi i B\!F (a,b)\right)
	\end{align*}
splits:
	\begin{align*}
		\prod_{k=0}^{p-2} \sum_{(a,b) \in  \mathscr{F}^m_k}  \exp \left(2 \pi i B\!F (a,b)\right).
	\end{align*}
Using the description of the map $d$ above (in particular that it is equivariant), together with the non-degeneracy of the Artin--Verdier pairing, we find that
\begin{proposition}
	\begin{align*}
		\sum_{(a,b) \in \mathscr F^m_k}  \exp \left(2 \pi i B\!F (a,b)\right)
		=
		\left| \left(p^m \cdot \Cl_K[p^{2m}]\right)_{k}\right| 
		\cdot 
		\left |\left( \O_X^\times / \left(\O_X^\times\right)^{p^m}\right)_{k} \right| 
		\cdot 
		\left | \left(\Cl_K / p^m\right)_k\right |
		.
	\end{align*}
	\end{proposition}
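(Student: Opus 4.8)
The plan is to carry out the character-sum computation of \cite{carlson-kim} one eigenspace at a time, exploiting that the Bockstein $d$, the cup product, and the invariant map are all $G'$-equivariant. Fix $k$ with $0 \le k \le p-2$ and abbreviate $A = H^1(X,\mu_{p^m})_k$ and $B = H^1_c(X,\Z/p^m\Z)_{-k}$, so that $\mathscr F^m_k = A \times B$. For fixed $a \in A$ the map $b \mapsto \mathrm{inv}(d a \cup b)$ is a homomorphism from the finite abelian group $B$ to $\frac{1}{p^m}\Z/\Z$, so by orthogonality of characters $\sum_{b \in B} \exp\!\left(2\pi i\, \mathrm{inv}(d a \cup b)\right)$ equals $|B|$ when this homomorphism is trivial and $0$ otherwise. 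Summing over $a$, the sum in the proposition becomes $|B| \cdot |\ker(d \colon A \to H^2(X,\mu_{p^m})_k)|$, provided the pairing detects exactly when $d a = 0$.

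To see that it does, and to compute $|B|$, I would first record that since $G$ acts trivially on $H^3_c(X,\mu_{p^m}) \cong \frac{1}{p^m}\Z/\Z$, the Artin--Verdier cup-product pairing satisfies $\langle g x, g y\rangle = \langle x, y\rangle$; plugging in eigenvectors gives $(\omega^{i+j}(g) - 1)\langle x, y\rangle = 0$, and since $\omega^{i+j}(g) - 1$ acts invertibly on $\frac{1}{p^m}\Z/\Z$ whenever it is nonzero, the pairing of the $\omega^{i}$-part of $H^2(X,\mu_{p^m})$ with the $\omega^{j}$-part of $H^1_c(X,\Z/p^m\Z)$ vanishes unless $i + j \equiv 0 \pmod{p-1}$. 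Combined with perfectness of the global Artin--Verdier pairing, this forces the restricted pairing $H^2(X,\mu_{p^m})_k \times H^1_c(X,\Z/p^m\Z)_{-k} \to \frac{1}{p^m}\Z/\Z$ to be perfect. Hence $b \mapsto \mathrm{inv}(d a \cup b)$ is trivial on $B$ exactly when $d a = 0$, and $|B| = |H^2(X,\mu_{p^m})_k| = |(\Cl_K/p^m)_k|$, the last equality being the isotypic refinement of $H^2(X,\mu_{p^m}) = \Cl_K/p^m$.

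It remains to compute $|\ker(d|_A)|$, and for this I would feed the $\omega^{k}$-component into the explicit description $d(a,I) = [I] \bmod p^m \Cl_K$ in the cocycle model $Z_1/B_1$, which is manifestly $G'$-equivariant. A class lies in $\ker d$ if and only if its ideal class $[I]$ lies in $p^m \Cl_K$; since $[I]$ is automatically $p^m$-torsion, this means $[I] \in p^m \Cl_K[p^{2m}]$, and $(a,I) \mapsto [I]$ defines a $G'$-equivariant surjection $\ker(d|_A) \twoheadrightarrow (p^m \Cl_K[p^{2m}])_k$. Its kernel consists of the classes with $I$ principal, say $I = \divis(b)$, in which case $a = u\, b^{-p^m}$ for some $u \in \O_X^\times$ and $(a,I) \equiv (u,0) \bmod B_1$, so the kernel is identified $G'$-equivariantly with $(\O_X^\times / (\O_X^\times)^{p^m})_k$. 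Multiplying $|B|$, $|(p^m \Cl_K[p^{2m}])_k|$, and $|(\O_X^\times/(\O_X^\times)^{p^m})_k|$ gives the proposition. The one step that genuinely uses the equivariance --- and essentially the only place where care is needed --- is the block-diagonalization of the Artin--Verdier pairing in the second paragraph: everything hinges on perfectness of the global pairing descending to perfectness on each matched pair of eigenspaces, which is exactly what the vanishing of the off-diagonal blocks supplies. The remainder is the isotypic bookkeeping of \cite{carlson-kim}.
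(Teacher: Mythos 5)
Your proposal is correct and follows essentially the same route as the paper: orthogonality of characters reduces the sum to $|\ker(d|_A)|\cdot|B|$, the kernel is filtered through the surjection onto $\left(p^m\Cl_K[p^{2m}]\right)_k$ with kernel $\left(\O_X^\times/(\O_X^\times)^{p^m}\right)_k$, and Artin--Verdier duality identifies $|B|$ with $|(\Cl_K/p^m)_k|$. Your explicit verification that perfectness of the global pairing descends to each matched pair of eigenspaces is a welcome elaboration of a step the paper only asserts, but it is not a different argument.
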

\begin{proof}
If $a \not\in \ker d$, then 
\begin{equation*}
\sum_{b \in H^1_c(X,\ZZ/p^m\ZZ)_{-k}}\exp \left(2 \pi i B\!F (a,b)\right)=\sum_{b \in H^1_c(X,\ZZ/p^m\ZZ)_{-k}}\exp\left(2\pi i \cdot \mathrm{inv}(da \cup b)\right)=0
\end{equation*}
because the Artin--Verdier pairing is non-degenerate. Since $\exp\left(2\pi i B\!F (a,b)\right)=1$ for any $a \in \ker d$, we have
\begin{equation*}
\sum_{(a,b) \in  \mathscr{F}^m_k}  \exp (2 \pi i B\!F (a,b))=|\ker d \cap H^1(X, \mu_{p^m})_k| \cdot |H^1_c(X,\ZZ/p^m\ZZ)_{-k}|.
\end{equation*}
Also, we have 
\begin{equation*}
(\ker d)_k=\ker d \cap H^1(X, \mu_{p^m})_k
\end{equation*}
since the map $d$ is equivariant. Furthermore, $d$ is the composite of two maps: the surjective map $f_1:H^1(X,\mu_{p^m}) \rightarrow \Cl_K[p^m],$ and then the reduction map $$f_2: \Cl_K[p^m] \rightarrow  \Cl_K/p^m,$$ which are both equivariant as well.  The kernel of $f_1$ is precisely $\mathcal{O}^\times_X/p^m,$ while the kernel of $f_2$ equals $p^m \cdot \Cl_K[p^{2m}].$ By taking eigenspaces, we have $$|(\ker d)_k|=|(\mathcal{O}^\times_X/p^m)_k| \cdot |(p^m \cdot \Cl_K[p^{2m} ])_k|.$$ 
Finally, $H^1_c(X,\ZZ/p^m)_{-k}$ is dual to $H^2(X,\mu_{p^m})_k$ and so the result follows.
\end{proof}

It is interesting to realize the path integral
	\begin{align}\label{eq:integral-k}
		\sum_{(a,b) \in \mathscr F^m_k}  \exp \left(2 \pi i B\!F (a,b)\right)
	\end{align}
as a path integral on $$Y := \mathrm{Spec} \left( \mathbb Z \right) \backslash \{p\} .$$
To achieve this, we note that the natural map $\pi \colon X \to Y$, factors as $$X \xrightarrow{\pi'} Y' \xrightarrow{\pi''} Y$$ where $$Y' = \mathrm{Spec}(\mathbb Z[\mu_p])\backslash (\zeta_p-1).$$
If we consider the sheaf $\pi_* (\mathbb Z/p^m\mathbb Z)$, it is easy to see that this sheaf corresponds, under the equivalence between sheaves on $Y$ split by $\pi$ and $G$-modules, to the group ring
	\begin{align*}
		\mathbb Z/p^m \mathbb Z[G] 
		\cong
		\mathbb Z / p^m \mathbb Z [x,y] / \left(x^{p^{m-1}}-1, y^{p-1}-1\right)
		.
	\end{align*}
This group ring is isomorphic, as a $G$-module, to
	\begin{align*}
		\bigoplus_{k=0}^{p-2} \left( \mathbb Z / p^m \mathbb Z [x] / \left(x ^{p^{m-1}}-1\right)\right)_{k}
	\end{align*}
where the action of $G'$ on the $k$-th piece in the direct sum is through $\omega^k$.
This calculation shows that $\pi_*(\mathbb Z/ p^m \mathbb Z)$ splits into a direct sum
	\begin{align*}
		\bigoplus_{k=0}^{p-2} \mathscr M_k
	\end{align*}
of ``eigensheaves'' with respect to the $G'$-action. Since Cartier duality commutes with pushforward \cite[Proposition D.1]{rosengarten}, we see that
	\begin{align*}
		\pi_* (\mu_{p^m}) = \bigoplus_{k=0}^{p-2} D(\mathscr M_k),
	\end{align*}
where $D$ denotes the Cartier dual.
For $k=0,1,\cdots,p-2$, we now claim that
	\begin{align*}
		\mathscr F^m_k = H^1(Y, D(\mathscr M_k)) \times H^1_c(Y, \mathscr M_k).
	\end{align*}
To see this, it is enough to establish that $H^1(Y, D(\mathscr M_k))$ identifies with the $\omega^k$-eigenspace of $H^1(X,\mu_{p^m})$ and that  $H^1_c(Y, \mathscr M_k)$ naturally identifies with the $\omega^{-k}$-eigenspace of $H^1_c(X,\ZZ/p^m\ZZ)$.

We proceed by first noting that $\pi_*(\mu_{p^m}) = \left( \pi_*'' \circ \pi_*' \right) \mu_{p^m}$, and that since $|G'|$ is prime to $p$, taking $G'$-fixed points is an exact functor.  Then we have the following string of equalities:
	\begin{align*}
		H^0(G' , H^i(X, \mu_{p^m }))
		=
		H^0 (G' , H^i(Y',\pi_*' \mu_{p^m}))
		=
		H^0(G' , H^i(Y',D(\mathscr M_0)))
		=
		H^i(Y , D(\mathscr M_0 )).
	\end{align*}
	The first equality follows from the fact that $\pi'$ is finite \'etale, the second follows from the fact that the restriction of $D(\mathscr M_0)$ to $Y'$ is isomorphic to $\pi_* ' \mu_{p^m}$, and the last follows from the fact that taking $G'$-fixed points is an exact functor.   
This shows that $H^i(Y , D(\mathscr M_0))$ naturally identifies with the $\omega^0$-eigenspace of $H^i (X, \mu_{p^m }),$ and we proceed by analysing the other eigenspaces. Let us note that 
$$D(\mathscr M_k)= D(\mathscr M_0) \otimes \mathscr G_k,$$ 
where $\mathscr G_k$ is the sheaf which, under the equivalence between sheaves split by $\pi''$ and $G'$-modules, corresponds to $\mathbb Z/p^m\mathbb Z$, but where the action of $G'$ is through $\omega ^{-k}$. We now claim that $$H^i(Y',D(\mathscr M_k)) = H^i(Y',D(\mathscr M_0))(\omega^{-k})$$ where $H^i(Y,D(\mathscr M_0))(\omega^{-k})$ is just $H^i(Y',D(\mathscr M_0))$ as an abelian group, but with the $G'$-action twisted by $\omega^{-k}$. Indeed, the pullback of $$D(\mathscr M_0) \otimes \mathscr G_k$$  to $Y'$ is isomorphic, as an abelian sheaf, to $D(\mathscr M_0),$ and looking at the Cech complex one finds that $H^i(Y',D(\mathscr M_0))(\omega^{-k})$ is indeed isomorphic to $H^i(Y',D(\mathscr M_k)).$ 

Thus, \begin{align*}
		H^0(G' , H^i(X, \mu_{p^m })(\omega^{-k}))
		=
		H^0(G' , H^i(Y',\pi_*' \mu_{p^m})(\omega^{-k}))
		=
		H^0(G' , H^i(Y',D(M_0))(\omega^{-k}))
	\end{align*}
which equals $H^0(G',H^i( Y', D(\mathscr M_k ))) = H^i(Y, D(\mathscr M_k)).$ Since $$H^i(X,\mu_{p^m})_k = H^0( G' , H^i( X, \mu_{p^m })(\omega^{-k}))$$  this gives that $H^i(X,\mu_{p^m})_k = H^i(Y,D(\mathscr M_k))$, which is the equality we wished to prove. Repeating  the same argument for $\mathscr M_k$ then shows that
	\begin{align*}	
		\mathscr F^m_k =  H^1( Y, D(\mathscr M_k)) \times H^1_c( Y, \mathscr M_k),
	\end{align*}
as claimed.  

The realization of the path integral \eqref{eq:integral-k} as a path integral on $Y$ is now straightforward.
Indeed, there is a natural Bockstein map $d \colon H^1( Y, D(\mathscr M_k)) \to H^2 (Y, D(\mathscr M_k))$ and we define 
\begin{align*}
B\!F \colon \mathscr F^m_k = H^1(Y,D(\mathscr M_k)) \times H^1_c(Y, \mathscr M_k) \to \frac{1}{p^m} \mathbb Z / \mathbb Z
\end{align*}
as $B\!F (a,b) = \mathrm{inv}(d a \cup b )$.
This realizes the path integral as a path integral on $Y$, which is what we wanted to see.

\medskip
\section{Calculation for large values of $m$}
We analyse the right-hand-side of the formula in Proposition 2.1 for large values of $m$.
If $p^m> \left|\Cl_K\right|$, then the first factor is one and the third factor equals the size of the $\omega^k$-isotypic component of the $p$-primary part of $\Cl_K$.
Below, we look into the factor in the middle. We will assume for simplicity that 
 $p^m > |\Cl_K[p^{\infty}]|$.

Let $U'\subset \O_X^\times$ be the subgroup generated by a primitive $q$-th root of unity $\zeta$ and elements of the form $1 - \zeta ^a$ where $a=q, 2,3,\cdots,q-1$. 
Therefore, we have a finite quotient $A:= \UX /U'$.

From the exact sequence
$$0\rightarrow U'\rightarrow \O_X^\times\ra A\ra 0$$
and the snake lemma, we get the long exact sequence
$$0\ra U'[p^m]\ra \O_X^\times [p^m]\ra A[p^m]$$
$$\ra U'/(U')^{p^m}\ra \UX/(\UX)^{p^m}\ra A/A^{p^m}\ra 0.$$
Since
$$U'[p^m]\simeq \O_X^\times [p^m]\simeq \mu_{p^n},$$
for $m\geq n$,
this gives an exact sequence
$$0\ra A[p^m]\ra U'/(U')^{p^m}\ra \O_X^\times/(\O_X^\times)^{p^m}\ra A/A^{p^m}\ra 0$$
and the same after taking $\omega^k$-isotypic components:
$$0\ra A[p^m]_k\ra (U'/(U')^{p^m})_k\ra (\UX/(\UX)^{p^m})_k\ra (A/A^{p^m})_k\ra 0.$$
Note that 
$A[p^m]_k \simeq A_k[p^m]$ and $(A/A^{p^m})_k \simeq A_k/{A_k^{p^m}}$ as $|G'|$ has order prime to $p$. Since $A$ and hence $A_k$ is finite, the kernel and cokernel have the same order, so that
$$|(U'/(U')^{p^m})_k|=| (\UX/(\UX)^{p^m})_k|.$$
Put $U := U'/\mu_{q}$. Since $U$ is torsion-free, another easy snake lemma argument gives an exact sequence
$$0\ra \mu_q\ra U'/(U')^{p^m}\ra U/U^{p^m}\ra 0$$
for $m\geq n$.
Thus, for $k\neq 1$, we get an isomorphism
$$(U'/(U')^{p^m})_k\simeq (U/U^{p^m})_k. $$

\begin{lemma}
We have
\begin{equation*}
\left(U'/(U')^{p^m}\right)_k = \begin{cases}
\{ 1\} & \text{ if $k$ is odd and $k\neq 1$},\\
\mathbb Z/{p^m\mathbb Z} & \text{ if $k$ is even}.
\end{cases}
\end{equation*}
\end{lemma}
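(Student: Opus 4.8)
The plan is to reduce the identity to a computation of $\mathbb{Z}_p$-ranks of the $\omega^k$-isotypic components of the cyclotomic $S$-unit group, and then to extract those ranks from the classical Galois-module structure of cyclotomic units.

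First I would use the reduction already prepared above: for $k\neq 1$ one has $(U'/(U')^{p^m})_k\simeq(U/U^{p^m})_k$ with $U=U'/\mu_q$ torsion-free, the case $k=1$ being set aside precisely because $\mu_q$, which sits in the $\omega$-isotypic component, then survives. Since $|G'|=p-1$ is prime to $p$, the idempotent $e_k=\frac{1}{p-1}\sum_{\sigma\in G'}\omega^{-k}(\sigma)\,\sigma\in\mathbb{Z}_p[G']$ cuts out a direct summand $e_k(\mathbb{Z}_p\otimes U)$ of the free $\mathbb{Z}_p$-module $\mathbb{Z}_p\otimes U$; it is therefore itself free, of rank $r_k:=\dim_{\mathbb{Q}_p}(\mathbb{Q}_p\otimes U)_k$, and reducing modulo $p^m$ gives $(U'/(U')^{p^m})_k\simeq(\mathbb{Z}/p^m\mathbb{Z})^{r_k}$ for every $k\neq1$. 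Thus the lemma is equivalent to the purely representation-theoretic statement that $r_k=0$ for $k$ odd and $r_k=1$ for $k$ even.

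Next I would identify $\mathbb{Q}_p\otimes U$ as a $\mathbb{Q}_p[G]$-module. As $A=\UX/U'$ is finite, $\mathbb{Q}_p\otimes U=\mathbb{Q}_p\otimes\UX$. Dirichlet's $S$-unit theorem for $S=\{\text{archimedean places}\}\cup\{\mathfrak p\}$, where $\mathfrak p=(\zeta_q-1)$ is the unique (hence $G$-stable, being totally ramified) prime of $K$ above $p$, gives a $G$-equivariant exact sequence $0\to\mathbb{Q}_p\otimes\UX\to\mathbb{Q}_p[G/\langle c\rangle]\oplus\mathbb{Q}_p\to\mathbb{Q}_p\to0$ with $c$ the complex conjugation; since $\mathbb{Q}_p[G]$ is semisimple, the trivial summand splits off and $\mathbb{Q}_p\otimes\UX\simeq\mathrm{Ind}_{\langle c\rangle}^G\mathbf 1$, which over $\overline{\mathbb{Q}_p}$ is the sum of the even characters of $G$. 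An even character of $G$ can restrict to $\omega^k$ on $G'$ only when $\omega^k(c)=(-1)^k=1$, i.e. only for $k$ even, so $r_k=0$ for $k$ odd; this already proves $(U'/(U')^{p^m})_k=\{1\}$ for $k$ odd and $k\neq1$, which is the part used in the main theorem.

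It remains to handle $k$ even, which is where I expect the genuine work. When $n=1$ it is immediate: there $G=G'$, each even character occurs with multiplicity one in $\mathbb{Q}_p\otimes\UX$, so $r_k=1$. For general $n$ I would exhibit an explicit cyclic generator: the norm (distribution) relations $\prod_{i=0}^{p-1}(1-\zeta_{p^n}\zeta_p^{\,i})=1-\zeta_{p^{n-1}}$ show that the $\mathbb{Z}[G]$-submodule of $\UX$ generated by all the $1-\zeta_q^{\,a}$ coincides with the cyclic module $\mathbb{Z}[G]\cdot(1-\zeta_q)$; after killing torsion and the relation $\sigma_{-1}(1-\zeta_q)\equiv1-\zeta_q$ it becomes cyclic over $\mathbb{Z}[G]/(c-1)\simeq\mathbb{Z}[G/\langle c\rangle]$, and one reads off its $\omega^k$-component and the reduction modulo $p^m$ directly. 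Making this last step precise — pinning down the $G''$-module structure of the even part of the cyclotomic $S$-units and identifying the resulting finite group — is the main obstacle and the heart of the argument.
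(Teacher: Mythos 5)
Your proof of the odd case---the only case used in Theorem 1.1---is complete and correct, but it follows a genuinely different route from the paper's. The paper simply quotes Bass's theorem that $U\simeq\Z[G^{+}]$ as a Galois module, where $G^{+}=\Gal(K^{+}/\Q)$ and $K^{+}$ is the maximal real subfield; complex conjugation then acts trivially on $U$, so all odd eigenspaces vanish. You avoid Bass entirely: the reduction to the rank $r_k$ of $(\Q_p\otimes U)_k$ is sound ($U$ is torsion-free and $|G'|$ is prime to $p$), and the identification $\Q_p\otimes\O_X^{\times}\simeq\mathrm{Ind}_{\langle c\rangle}^{G}\mathbf{1}$ from the Dirichlet $S$-unit sequence, combined with the fact that $c\in G'$, correctly forces $r_k=0$ for odd $k$. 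This is more elementary and self-contained than the citation of Bass, at the cost of controlling only ranks rather than the integral module structure.

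For even $k$ you stop short, which is a genuine gap relative to the statement as displayed---but the obstacle is not where you locate it. Your own character count already determines $r_k$: the restriction map $\widehat{G}\to\widehat{G'}$ has fibres of size $p^{n-1}$, and since $c\in G'$, \emph{every} character of $G$ lying over an even $\omega^{k}$ is even; hence $r_k=p^{n-1}$ for even $k$, so $(U'/(U')^{p^m})_k\simeq(\Z/p^m\Z)^{p^{n-1}}$. This matches Bass's $U\simeq\Z[G^{+}]$, whose $\omega^{k}$-eigenspace is free of rank $[G^{+}:G'/\langle c\rangle]=p^{n-1}$, and it means the displayed value $\Z/p^m\Z$ (and the final sentence of the paper's own proof) is literally correct only when $n=1$. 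The explicit cyclic-generator analysis you propose as ``the heart of the argument'' is therefore unnecessary---the rank count settles the even case---and if carried out it would not yield a cyclic group for $n>1$. None of this affects Theorem 1.1, which uses only odd $k\neq 1$, but for $n>1$ it changes the normalisation $1/p^{m}$ in \eqref{eq:formula2} to $1/p^{mp^{n-1}}$.
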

\begin{proof}
The structure of $U$ as a Galois module is known. 
Let $K^+$ be the maximal totally real subfield of $K$ and let $G^+ =\mathrm{Gal}(K^+/\mathbb Q)$.
In \cite[Theorem 3]{bass}, Bass proved that there is an isomorphism $U \simeq \mathbb Z[G^+]$ as Galois modules.
Thus, the assertion follows since $\mathbb Z/p^m\mathbb Z[G^+] = \bigoplus_{k \textrm{:even}} \left( \mathbb Z/p^m\mathbb Z[G^+]\right)_k$ and each summand is isomorphic to $\mathbb Z/p^m\mathbb Z$. 
\end{proof}

As a consequence, for even $k$, we get
\begin{align}\label{eq:formula2}
\lim_{m \to \infty} \frac{1}{p^m}\sum_{(a,b) \in \mathscr F^m_k}  \exp(2 \pi i B\!F (a,b)) = \left| \mathrm{Cl}_K[p^\infty]_k\right|
\end{align}
and for odd $k\neq 1$, 
\begin{align}\label{eq:formula3}
\lim_{m \to \infty}\sum_{(a,b) \in \mathscr F^m_k}  \exp \left(2 \pi i B\!F (a,b)\right) = \left| \mathrm{Cl}_K[p^\infty]_k\right|.
\end{align}

\medskip
\section{Connection to $p$-adic $L$-functions: Proof of Theorem 1.1.}
We interpret the right-hand side of the formula in terms of  special values of the Kubota-Leopoldt $p$-adic $L$-functions.
As in the introduction, let
$$V=\Gal(M/\Q(\mu_{p^{\infty}})),$$
the Galois group of the maximal abelian unramified $p$-extension $M$ of 
$\Q(\mu_{p^{\infty}})$. 
We now vary $n$ in $q=p^n$, and denote by $\Cl_{K_n}[p^{\infty}]$ the $p$-primary part of the ideal class group of $K_n:=\mathbb Q(\mu_{p^{n+1}})$. 
We assume in the following that $k\neq 1$ is odd. By the main conjecture and the vanishing of the $\mu$-invariant \cite{FW}, we have an exact sequence
$$0\ra A\ra V_k\ra M\ra B\ra 0,$$
where $A$ and $B$ are finite,
$$M\simeq \prod_j \Z_p[[T]]/(f_j(T) )$$
and the $f_j(T)$ are power series such that
$$z_{1-k}(T)=\prod_j f_j(T).$$
Consider the action of $\G_n\subset \G$ generated by $\g^{p^n}$, where $\g$ is a topological generator of $\G$. We get an equation of $\G_n$-Euler characteristics
$$\frac{\chi(\G_n, A)\chi(\G_n, M)}{\chi(\G_n, V_k)\chi(\G_n, B)}=1$$
where 
$$\chi(\G_n, \cdot)=\prod |H_i(\G_n, \cdot)|^{(-1)^i}.$$
Note that $$H_i(\G_n, \cdot)\simeq H^i(\G_n, (\cdot)^{\vee})^{\vee},$$
where $$(\cdot)^{\vee}=\mbox{Hom}(\cdot, \Q/\Z)$$
is the Pontriagin dual.
On the other hand, since $A$ and $B$ are finite, the exact sequence
$$0\ra H^0(\G_n, A^{\vee})\ra A\stackrel{\g^{p^n}-1}{\longrightarrow}A\ra H^1(\G_n, A^{\vee})\ra 0$$
and the similar one for $B$
imply that
$$\chi(\G_n, A)=\chi(\G_n, B)=1.$$
Therefore,
$$\chi(\G_n, V_k)=\chi(\G_n, M).$$
On the other hand, since all the extensions $K_n/\Q(\mu_p)$ are totally ramified over the single prime lying above $p$, by \cite[Section 1]{greenberg} we get
 $$(\Cl_{K_n}[p^{\infty}])_k\simeq V_k/((T+1)^{p^n}-1)V_k=H_0(\G_n, V_k).$$
By \cite[Corollary 13.29]{washington}, we have
$$V_k\simeq \Z_p^a$$
for some $a\geq 0$. We have the exact sequence
$$0\ra H_1(\G_n, V_k)\ra V_k\stackrel{\g^{p^n}-1}{\longrightarrow}V_k \ra H_0(\G_n, V_k)\ra 0.$$
Since $H_0(\G_n, V_k)$ is finite, the operator $\g^{p^n}-1$ does not have the eigenvalue 0. Hence,
$$H_1(\G_n, V_k)=0.$$
Similarly, $H_1(\G_n, M)=0$. Thus,
$$|V_k/((T+1)^{p^n}-1)V_k|=|H_0(\G_n, V_k)|=|H_0(\G_n, M)|=|M/((T+1)^{p^n}-1)M|.$$
 
 So finally,
\begin{equation*}
\begin{split}
|(\Cl_{K_n}[p^{\infty}])_k| &= |V_k/((T+1)^{p^n}-1)V_k| \\
&= |\Z_p[[T]]/( (T+1)^{p^n}-1, z_{1-k}(T))|\\
&=|\prod_j \Z_p/(z_{1-k}(\zeta_{p^n}^j-1))|\\
&=|\prod_j z_{1-k}(\zeta_{p^n}^j-1)^{-1}|_p
\end{split}
\end{equation*}
yielding the desired formula.

\begin{remark}
For even values of $k$, \eqref{eq:formula2} implies the vanishing
\begin{align}
\lim_{m \to \infty} \frac{1}{\sum_{(a,b) \in \mathscr F^m_k}  \exp \left(2 \pi i B\!F (a,b)\right)}=0,
\end{align}
which is superficially analogous to the vanishing of $L(\chi,s)$ at all negative integers when $\chi$ is an odd Dirichlet character.
\end{remark}
\section*{Acknowledgements}
Magnus Carlson would like to thank the Knut and Alice Wallenberg foundation for their support. The work of DK was supported by the National Research Foundation of Korea (2020R1C1C1A0100681913) and by Samsung Science and Technology Foundation (SSTF-BA2001-01).
M.K. was supported in part by  UKRI grant EP/V046888/1 and a Simons Fellowship at the Isaac Newton Institute. He is grateful to David Ben-Zvi, Dennis Gaitsgory, Barry Mazur, and Akshay Venkatesh for helpful conversations about arithmetic topology. The work of Jeehoon Park was supported by the National Research Foundation of Korea (NRF-2021R1A2C1006696) 
and the National Research Foundation of Korea (NRF) grant funded by the Korea government (MSIT) (No.2020R1A5A1016126). The work of H.Y. was supported by the National Research Foundation of Korea(NRF) grant funded by the Korea government(MSIT) (No. 2020R1A5A1016126).



\end{document}